\def\({\bg(}
\def\){\bg)}
\def\t{\text}
\def\ls{\leqslant}
\def\char{{\rm char}}
\def\1{{\bf 1}}
\def\pmod #1{\ ({\rm{mod}}\ #1)}
\def\Ack{\medskip\noindent {\bf Acknowledgments}}
\theoremstyle{plain}
\newtheorem{theorem}{Theorem}[section]
\newtheorem{lemma}{Lemma}
\newtheorem{corollary}{Corollary}
\newtheorem{conjecture}{Conjecture}
\theoremstyle{definition}
\theoremstyle{remark}
\begin{document}
	\medskip
	
	\title[On generalized Legendre matrices involving roots of unity over finite fields]
	{On generalized Legendre matrices involving roots of unity over finite fields}
	\author[N.-L. Wei, Y.-B. Li and  H.-L. Wu]{Ning-Liu Wei, Yu-Bo Li and Hai-Liang Wu}

	\address {(Ning-Liu Wei) School of Science, Nanjing University of Posts and Telecommunications, Nanjing 210023, People's Republic of China}
	\email{weiningliu6@163.com}
	
	\address {(Yu-Bo Li) School of Science, Nanjing University of Posts and Telecommunications, Nanjing 210023, People's Republic of China}
	\email{lybmath2022@163.com}
	
	\address {(Hai-Liang Wu) School of Science, Nanjing University of Posts and Telecommunications, Nanjing 210023, People's Republic of China}
	\email{\tt whl.math@smail.nju.edu.cn}
	
	\keywords{Legendre symbols, Finite Fields, Determinants.
		\newline \indent 2020 {\it Mathematics Subject Classification}. Primary 05A19, 11C20; Secondary 15A18, 15B57.
		\newline \indent This work was supported by the Natural Science Foundation of China (Grant No. 12101321).}
	
	\begin{abstract}
		In this paper, motivated by the work of Chapman, Vsemirnov and Sun et al., we investigate some arithmetic properties of the generalized Legendre matrices over finite fields. For example, letting $a_1,\cdots,a_{(q-1)/2}$ be all non-zero squares in the finite field $\mathbb{F}_q$ which contains $q$ elements with $2\nmid q$, we give the explicit value of $D_{(q-1)/2}=\det[(a_i+a_j)^{(q-3)/2}]_{1\le i,j\le (q-1)/2}$. In particular, if $q=p$ is a prime greater than $3$, then 
		$$\left(\frac{\det D_{(p-1)/2}}{p}\right)=
		\begin{cases}
			1 & \mbox{if}\ p\equiv1\pmod4,\\
			(-1)^{(h(-p)+1)/2} & \mbox{if}\ p\equiv 3\pmod4\ \text{and}\ p>3,
		\end{cases}$$
		where $(\cdot/p)$ is the Legendre symbol and $h(-p)$ is the class number of $\mathbb{Q}(\sqrt{-p})$.
	\end{abstract}
	\maketitle
	
	\section{Introduction}
	\setcounter{lemma}{0}
	\setcounter{theorem}{0}
	\setcounter{equation}{0}
	\setcounter{conjecture}{0}
	\setcounter{remark}{0}
	\setcounter{corollary}{0}
	
	\subsection{Related Work and Motivations}
	Let $p$ be an odd prime and let $(\cdot/p)$ be the Legendre symbol. Chapman \cite{Chapman2004, Chapman2012} investigated the determinants involving Legendre matrices
	$$C_1=\left[\left(\frac{i+j-1}{p}\right)\right]_{1\le i,j\le (p-1)/2}$$ 
	and
	$$C_2=\left[\left(\frac{i+j-1}{p}\right)\right]_{1\le i,j\le (p+1)/2}.$$
	Surprisingly, these determinants are closely related to the quadratic fields. In fact, letting $\varepsilon_p>1$ and $h(p)$ be the fundamental unit and the class number of $\mathbb{Q}(\sqrt{p})$ and writing $\varepsilon_p=a_p+b_p\sqrt{p}$ with $a_b,b_p\in\mathbb{Q}$, Chapman \cite{Chapman2004} proved that 
	$$\det C_1=
	\begin{cases}
		(-1)^{(p-1)/4}2^{(p-1)/2}b_p & \mbox{if}\ p\equiv1\pmod4,\\
		0                            & \mbox{otherwise,}
	\end{cases}$$
	and 
	$$\det C_2=
	\begin{cases}
		(-1)^{(p+3)/4}2^{(p-1)/2}a_p   & \mbox{if}\ p\equiv1\pmod4,\\
		-2^{(p-1)/2}                   & \mbox{otherwise.}
	\end{cases}$$
	
	Later Chapman \cite{Chapman2012} posed the following conjecture.
	\begin{conjecture}[Chapman]
		Let $p$ be an odd prime and write 
		$\varepsilon_p^{(2-(2/p))h(p)}=a_p'+b_p'\sqrt{p}$ with $a_p',b_p'\in\mathbb{Q}$.
		Then 
		$$\det \left[\left(\frac{j-i}{p}\right)\right]_{1\le i,j\le (p+1)/2}
		=\begin{cases}
			-a_p' & \mbox{if}\ p\equiv1\pmod 4,\\
			1     & \mbox{otherwise.}
		\end{cases}$$
	\end{conjecture}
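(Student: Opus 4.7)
The plan is to analyze $M=[\chi(j-i)]_{1\le i,j\le(p+1)/2}$, where $\chi=(\cdot/p)$, by exploiting its parity under transposition and by embedding it in the full Legendre circulant, then to invoke Dirichlet's analytic class number formula for $\mathbb{Q}(\sqrt{p})$.

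When $p\equiv 3\pmod 4$ we have $\chi(-1)=-1$, so $M$ is skew-symmetric of even order $(p+1)/2$, whence $\det M=\mathrm{Pf}(M)^{2}\ge 0$ and it suffices to prove $\mathrm{Pf}(M)=\pm 1$. The strategy is to construct an explicit unimodular matrix $U\in\mathrm{GL}_{(p+1)/2}(\mathbb{Z})$ bringing $M$ to the standard symplectic form $\bigoplus\bigl(\begin{smallmatrix}0&1\\-1&0\end{smallmatrix}\bigr)$. Because the entries of $M$ lie in $\{-1,0,1\}$, Gaussian elimination over $\mathbb{Z}$ is tractable; the challenge is to find a uniform inductive description of $U$ (for instance by pivoting on rows indexed by quadratic residues in $\{1,\ldots,(p-1)/2\}$ and using the multiplicative structure of $\chi$ to predict the effect on later columns).

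When $p\equiv 1\pmod 4$, $M$ is symmetric and the answer involves $\varepsilon_{p}$. The approach is to view $M$ as a principal $(p+1)/2\times(p+1)/2$ submatrix of the $p\times p$ circulant $C=[\chi(j-i)]_{0\le i,j\le p-1}$, whose non-zero eigenvalues are $\chi(k)g(\chi)$ for $k=1,\ldots,p-1$, with $g(\chi)=\sum_{a}\chi(a)\zeta_{p}^{a}$ the quadratic Gauss sum and $g(\chi)^{2}=p$. Jacobi's complementary-minor identity then relates $\det M$ to the determinant of the complementary $(p-1)/2\times(p-1)/2$ principal submatrix of $C$ up to an explicit power of $g(\chi)$, and this complementary determinant factors as a product of cyclotomic units of the shape $\prod_{a}(1-\zeta_{p}^{a})^{\chi(a)}$. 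By Dirichlet's analytic class number formula, such a product collapses (after pairing $a\leftrightarrow p-a$) to a power of $\varepsilon_{p}$, producing the announced $a_{p}'$.

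The main obstacle is matching the precise exponent $(2-(2/p))h(p)$ and the sign $-1$ on the nose. The exponent arises because $\varepsilon_{p}^{h(p)}$ need not lie in $\mathbb{Z}[\sqrt{p}]$ when $(2/p)=1$ (the extra square, forced by the splitting of $2$ in $\mathbb{Q}(\sqrt{p})$, is exactly what produces the factor $2-(2/p)$); this parity bookkeeping is where most of the technical work sits. Tracking the overall sign requires controlling Gauss's theorem on the sign of $g(\chi)$ together with the parity of $h(p)$, and is by experience the most delicate step. A cleaner alternative would be to exhibit an explicit factorization $M=AJA^{\top}$ over $\mathbb{Z}[\sqrt{p}]$ with $J$ diagonal of small entries, from which the conjecture would follow by taking determinants; producing such an $A$ explicitly, however, appears to be at least as hard as the original problem, and I expect the cyclotomic-unit route to be the realistic path.
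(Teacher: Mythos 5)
There is a genuine gap: what you have written is a research plan, not a proof, and you concede as much at every critical juncture. For $p\equiv 3\pmod 4$ you correctly observe that $M$ is skew-symmetric of even order $(p+1)/2$, so $\det M=\mathrm{Pf}(M)^2$, but the entire content of the conjecture in this case is that $\mathrm{Pf}(M)=\pm1$, and your ``strategy to construct an explicit unimodular matrix $U$'' is never carried out; no inductive description of $U$ is given, and no reason is offered why the pivoting you describe should terminate with the standard symplectic form rather than with blocks $\bigl(\begin{smallmatrix}0&d\\-d&0\end{smallmatrix}\bigr)$ for some $d>1$. For $p\equiv 1\pmod 4$ your route has a concrete technical obstruction you do not address: the full circulant $C=[\chi(j-i)]_{0\le i,j\le p-1}$ is \emph{singular} (it has the zero eigenvalue corresponding to the trivial character), so Jacobi's complementary-minor identity in the form you invoke (which divides by $\det C$) does not apply; one would need a careful adjugate/rank-deficient version, and the claimed factorization of the complementary minor into cyclotomic units $\prod_a(1-\zeta_p^a)^{\chi(a)}$ is asserted rather than derived. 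Finally, you yourself flag that matching the exponent $(2-(2/p))h(p)$ and the sign $-a_p'$ is ``the most delicate step'' and leave it open.

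For context: the paper you are working from does not prove this statement either; it records it as Chapman's conjecture and cites its resolution by Vsemirnov (the case $p\equiv3\pmod4$ in 2012 and the case $p\equiv1\pmod4$ in 2013). Those proofs do not go through Gauss sums and the analytic class number formula in the way you propose --- that is essentially the route that works for Chapman's original determinants $C_1$ and $C_2$ but was found insufficient for the ``evil determinant,'' which is precisely why it stayed open for years. Vsemirnov instead produces intricate explicit matrix factorizations of $M$ over $\mathbb{Z}$ (resp.\ over $\mathbb{Z}[\sqrt p]$) and verifies them by direct, lengthy computation. So your outline is not a proof, and the specific analytic path you favor is the one that is known not to close easily; if you want to pursue this statement you should study Vsemirnov's two papers rather than attempt the cyclotomic-unit route.
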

	
	Due to the difficulty of this conjecture, Chapman called this determinant ``the evil determinant". In 2012 and 2013, Vsemirnov \cite{Vsemirnov2012, Vsemirnov2013} confirmed this conjecture (the case $p\equiv3\pmod4$ in \cite{Vsemirnov2012} and the case $p\equiv 1\pmod4$ in \cite{Vsemirnov2013}). 
	
	Along this line, in 2019 Sun \cite{S19} studied some variants of Chapman's determinants. For example, let 
	$$S(d,p)=\det\left[\left(\frac{i^2+dj^2}{p}\right)\right]_{1\le i,j\le (p-1)/2}.$$
	Sun \cite[Thm. 1.2]{S19} showed that $S(d,p)=0$ whenever $(d/p)=-1$ and that $(-S(d,p)/p)=1$ whenever $(d/p)=1$. Readers may refer to \cite{krachun, Li-Wei,W21, Wu-She-Wang} for the recent progress on this topic. Also, Sun \cite[Thm. 1.4]{S19} proved that 
	\begin{equation}\label{Eq. Sun determinant A}
		\det\left[\frac{((i+j)/p)}{i+j}\right]_{1\le i,j\le (p-1)/2}
		\equiv\begin{cases}
			(2/p)\pmod p       & \mbox{if}\ p\equiv1\pmod4,\\
			((p-1)/2)!\pmod p  & \mbox{otherwise,}
		\end{cases}
	\end{equation}
	and that 
	$$\det\left[\frac{1}{i^2+j^2}\right]_{1\le i,j\le (p-1)/2}\equiv (-1)^{\frac{p+1}{4}}\pmod p$$
	whenever $p\equiv3\pmod4$. In 2022, the third author and Wang \cite[Thm. 1.7]{Wu-Wang} considered the determinant $\det[1/(\alpha_i+\alpha_j)]_{1\le i,j\le (p-1)/k}$, where 
	$0<\alpha_1,\cdots,\alpha_{(p-1)/k}<p$ are all $k$th power residues modulo $p$ and showed that for any positive even integer $k\mid p-1$, if $-1$ is not a $k$th power modulo $p$, then  
	$$\det\left[\frac{1}{\alpha_i+\alpha_j}\right]_{1\le i,j\le m}
	\equiv \frac{(-1)^{(m+1)/2}}{(2k)^m}\pmod p,$$
	where $m=(p-1)/k$. 
	
	Now let $\mathbb{F}_q$ be the finite field of $q$ elements with $\char(\mathbb{F}_q)=p>2$. It is known that $\mathbb{F}_q^{\times}=\mathbb{F}_q\setminus\{0\}$ is a cyclic group of order $q-1$ and that the subgroups
	$$U_k=\{x\in\mathbb{F}_q: x^k=1\}=\{a_1,\cdots,a_k\}\ (k\ge 1, k\mid q-1)$$
	are exactly all subgroups of $\mathbb{F}_q^{\times}$. Let $\phi$ be the unique quadratic character of $\mathbb{F}_q$, i.e., 
	$$\phi(x)=\begin{cases}
		 1 & \mbox{if}\ x\ \text{is a non-zero square},\\
		 0 & \mbox{if}\ x=0,\\
		-1 & \mbox{otherwise.}
	\end{cases}$$
    As $\char(\mathbb{F}_q)>2$, the subset $\{\pm1\}\subseteq\mathbb{Z}$ can be viewed as a subset of $\mathbb{F}_q$. From now on, we always assume $\pm1\in\mathbb{F}_q$. Inspired by Sun's determinant (\ref{Eq. Sun determinant A}), it is natural to consider the matrix $[\frac{\phi(a_i+a_j)}{a_i+a_j}]_{1\le i,j\le k}$. However, if $k\mid q-1$ is even, then the denominator $a_i+a_j=0$ for some $i,j$ since $-1\in U_k$ in this case. To overcome this obstacle, note that for any $x\in\mathbb{F}_q$ we have $\phi(x)=x^{(q-1)/2}$. Hence in this paper, we first focus on the matrix 
    $$D_k:=\left[\left(a_i+a_j\right)^{(q-3)/2}\right]_{1\le i,j\le k}.$$
	The main results involving $D_k$ will be given in Section 1.2. 
	
	We now consider another type of determinants. Sun \cite[Remark 1.3]{S19} posed the following conjecture.
	\begin{conjecture}[Sun]
		Let $p\equiv2\pmod3$ be an odd prime. Then 
		\begin{equation}\label{Eq. Sun determinant B}
			2\det\left[\frac{1}{i^2-ij+j^2}\right]_{1\le i,j\le p-1}
		\end{equation}
		is a quadratic residue modulo $p$. 
	\end{conjecture}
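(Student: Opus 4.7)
The plan is to diagonalize the matrix as a circulant over $\mathbb{F}_p$, halve the resulting product of eigenvalues by a Galois-type symmetry, and then compute the two remaining ``fixed-point'' eigenvalues in closed form via partial fractions over $\mathbb{F}_{p^2}$.

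Write $M_{ij}=(1/j^2)f(i/j)$ with $f(t)=1/(t^2-t+1)$. Since $\prod_j(1/j^2)=1/((p-1)!)^2\equiv 1\pmod p$ by Wilson, one has $\det M\equiv \det[f(i/j)]_{1\le i,j\le p-1}\pmod p$. Reindexing rows and columns via a primitive root $g$ modulo $p$ (by $i=g^a$, $j=g^b$) turns the latter into a circulant $C$ with first row $c_m=f(g^m)$ indexed by $m\in\mathbb{Z}/(p-1)\mathbb{Z}$. Because $g$ is itself a primitive $(p-1)$-th root of unity in $\mathbb{F}_p^\times$, the classical circulant diagonalization applies verbatim over $\mathbb{F}_p$, giving
\[
\det M\equiv\prod_{k=0}^{p-2}F(g^k)\pmod p,\qquad F(g^k):=\sum_{y\in\mathbb{F}_p^\times}\frac{y^k}{y^2-y+1}.
\]
The substitution $y\mapsto 1/y$ shows $F(g^k)=F(g^{2-k})$, so the involution $k\mapsto 2-k$ on $\mathbb{Z}/(p-1)\mathbb{Z}$, which has exactly the two fixed points $k=1$ and $k=(p+1)/2$, pairs the remaining $(p-3)/2$ orbits into squares. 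Hence $\det M\equiv F(g)\cdot F(g^{(p+1)/2})\pmod{(\mathbb{F}_p^\times)^2}$, and it suffices to show $2F(g)F(g^{(p+1)/2})$ is a quadratic residue.

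To compute these two eigenvalues, I would factor $y^2-y+1=(y+\omega)(y+\omega^2)$ in $\mathbb{F}_{p^2}[y]$, where $\omega$ is a primitive cube root of unity; the hypothesis $p\equiv 2\pmod 3$ puts $\omega$ in $\mathbb{F}_{p^2}\setminus\mathbb{F}_p$ and makes the Frobenius send $\omega\mapsto\omega^2$. Applying logarithmic differentiation to $\prod_{y\in\mathbb{F}_p}(x+y)=x^p-x$ at $x=\omega$ and subtracting the $y=0$ contribution yields $\sum_{y\in\mathbb{F}_p^\times}1/(y+\omega)=1/(1-\omega)$, and similarly for $\omega^2$. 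Using $(1-\omega)(1-\omega^2)=3$ together with partial fractions gives both $F(g)=1/3$ and $\sum_{y\in\mathbb{F}_p^\times}1/(y^2-y+1)=-1/3$. For $F(g^{(p+1)/2})$ I would exploit $y^6\equiv 1\pmod{y^2-y+1}$: writing $p=6k+5$, one has $(p+1)/2=3(k+1)$, which is $\equiv 0$ or $3\pmod 6$ according as $p\equiv 3$ or $1\pmod 4$, so $y^{(p+1)/2}\equiv -\left(\frac{-1}{p}\right)\pmod{y^2-y+1}$. Polynomial division combined with the two sums above then yields $F(g^{(p+1)/2})=-\frac{2}{3}\left(\frac{-1}{p}\right)$.

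Combining the pieces gives $2F(g)F(g^{(p+1)/2})\equiv-\frac{4}{9}\left(\frac{-1}{p}\right)\pmod p$. Since $(2/3)^2$ is a square, this has the same quadratic character as $-\left(\frac{-1}{p}\right)$, and the latter is always a quadratic residue mod $p$: it equals $+1$ when $p\equiv 3\pmod 4$, and it equals $-1$ when $p\equiv 1\pmod 4$, in which case $-1$ is itself a square. Hence $2\det M$ is a quadratic residue, as desired. The main obstacle will be the computation of $F(g^{(p+1)/2})$: one must carefully handle the polynomial-division step when the exponent $(p+1)/2$ exceeds $\deg(y^2-y+1)=2$, and track the Frobenius action on $\omega$ precisely enough to extract the correct sign $-\left(\frac{-1}{p}\right)$; the remaining manipulations are routine.
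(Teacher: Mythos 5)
First, a point of reference: the paper does not prove this statement. It is recorded as Sun's conjecture purely for motivation, and its resolution is attributed to \cite{WSN}, whose result (quoted as Theorem 1.3 of the Introduction) gives the exact value $(-1)^{(q+1)/2}2^{(q-2)/3}$ of the determinant over any $\mathbb{F}_q$ with $q\equiv2\pmod 3$, from which the quadratic-residue claim is immediate. So there is no in-paper proof to compare against; your argument is necessarily an independent route. That said, it is very much in the spirit of the paper's Section 3: your reindexing by a primitive root and the involution $k\mapsto 2-k$ play exactly the role that the twist by $g^{mj-mi}(-1)^{j-i}$ and Lemma 3.2 play in the proof of Theorem 1.2. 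I checked your computations: the circulant diagonalization over $\mathbb{F}_p$ is legitimate (all $(p-1)$th roots of unity lie in $\mathbb{F}_p$), the involution $k\mapsto 2-k$ on $\mathbb{Z}/(p-1)\mathbb{Z}$ does have exactly the fixed points $1$ and $(p+1)/2$, and the values $F(g)=1/3$ and $F(g^{(p+1)/2})=-\frac{2}{3}\left(\frac{-1}{p}\right)$ are correct (verified directly for $p=5$ and $p=11$), as is the final sign analysis.

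There is, however, one genuine gap. Your step ``$\det M\equiv F(g)\cdot F(g^{(p+1)/2})$ modulo nonzero squares'' silently assumes that none of the $(p-3)/2$ paired eigenvalues $F(g^k)$ vanishes; if one did, then $\det M=0$ and $2\det M$ would not be a quadratic residue in the sense intended throughout this literature (Legendre symbol equal to $+1$). Your two fixed-point evaluations say nothing about the other eigenvalues, so as written the proof does not rule this out. Fortunately the gap closes with the tools you already set up: dividing $y^k$ by $y^2-y+1$ gives $y^k\equiv\alpha_k y+\beta_k$ with $(\alpha_k,\beta_k)$ depending only on $k\bmod 6$ (use $y^2\equiv y-1$, $y^3\equiv-1$), and for $1\le k\le p-2$ the same constant-term bookkeeping you used for $k=(p+1)/2$ yields $F(g^k)=(\alpha_k+2\beta_k)/3\in\{\pm\tfrac13,\pm\tfrac23\}$, while $F(g^0)=-\tfrac13$; all of these are nonzero since $p\ge5$. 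Once you add this, your argument not only fills the gap but, upon multiplying all $p-1$ eigenvalues together, recovers the exact value $(-1)^{(p+1)/2}2^{(p-2)/3}$ of \cite{WSN} for $q=p$ prime (using $3^{p-1}=1$ in $\mathbb{F}_p$), so you would in fact be proving the stronger theorem rather than just the conjecture.
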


	Later the third author, She and Ni \cite{WSN} obtained the following generalized result.
	\begin{theorem}[Wu, She and Ni]
		Let $q\equiv2\pmod3$ be an odd prime power. Let $\beta_1,\cdots,\beta_{q-1}$ be all non-zero elements of $\mathbb{F}_q$. Then 
		$$\det\left[\frac{1}{\beta_i^2-\beta_i\beta_j+\beta_j^2}\right]_{1\le i,j\le q-1}
		=(-1)^{(q+1)/2}2^{(q-2)/3}\in\mathbb{F}_p,$$
		where $p=\char(\mathbb{F}_q)$.
	\end{theorem}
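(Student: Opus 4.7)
The plan is to realize $\det M$ as a group determinant for the cyclic group $\mathbb{F}_q^\times$, factor it as a product of character sums, and evaluate each sum via partial fractions over $\mathbb{F}_{q^2}$. The starting point is
$$\beta_i^2-\beta_i\beta_j+\beta_j^2=\beta_i\beta_j\left(\frac{\beta_i}{\beta_j}+\frac{\beta_j}{\beta_i}-1\right),$$
which lets me write $M=\mathrm{diag}(\beta_i^{-1})\cdot[h(\beta_i\beta_j^{-1})]\cdot\mathrm{diag}(\beta_j^{-1})$ with $h(g)=g/(g^2-g+1)$. Since $\prod_{\beta\in\mathbb{F}_q^\times}\beta=-1$, the diagonal scaling contributes $1$ to the determinant, and after ordering $\beta_i=\alpha^{i-1}$ for a generator $\alpha$ of $\mathbb{F}_q^\times$ the matrix $[h(\alpha^{i-j})]$ becomes circulant. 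Diagonalization of circulants is available over $\mathbb{F}_q$ because $\alpha$ itself is a primitive $(q-1)$-th root of unity; after a harmless relabeling $\chi\mapsto\chi\cdot\omega_0$ (with $\omega_0(g)=g$) this yields
$$\det M=\prod_\chi\sum_{g\in\mathbb{F}_q^\times}\frac{\chi(g)}{g^2-g+1},$$
where $\chi$ runs over all $q-1$ multiplicative characters of $\mathbb{F}_q^\times$.

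To evaluate each character sum, observe that $q\equiv 2\pmod 3$ and $q$ odd give $q\equiv 5\pmod 6$, so a primitive sixth root of unity $\omega$ lies in $\mathbb{F}_{q^2}\setminus\mathbb{F}_q$, with $\omega^q=\omega^{-1}$ and $(\omega-\omega^{-1})^2=-3$. Partial fractions reduce each sum to two of the form $\sum_{g\in\mathbb{F}_q^\times}\chi(g)/(g-a)$ with $a\in\{\omega,\omega^{-1}\}$. Using $1/(g-a)=-R(g)/(a^q-a)$ for $g\in\mathbb{F}_q$, where $R(x)=(x^q-x-(a^q-a))/(x-a)=\sum_{l=0}^{q-1}a^{q-1-l}x^l-1$, together with the orthogonality $\sum_{g\in\mathbb{F}_q^\times}g^k=-1$ when $(q-1)\mid k$ and $0$ otherwise, one obtains
$$\sum_{g\in\mathbb{F}_q^\times}\frac{\chi_j(g)}{g-a}=\begin{cases}a^j/(a^q-a), & 1\le j\le q-2,\\ a^{q-1}/(a^q-a), & j=0,\end{cases}$$
where $\chi_j(g)=g^j$. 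Substituting $a=\omega,\omega^{-1}$ and using $\omega^q-\omega=-\sqrt{-3}$, the character sums collapse to
$$T(\chi_j)=\frac{\omega^j+\omega^{-j}}{3}\quad(1\le j\le q-2),\qquad T(\chi_0)=-\frac{1}{3}.$$

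The last step is to multiply. Since $p\ne 3$ (as $3\nmid q$), Fermat's little theorem gives $3^{q-1}\equiv 1\pmod p$, so the denominators cancel and
$$\det M=-\prod_{j=1}^{q-2}(\omega^j+\omega^{-j}).$$
The values $\omega^j+\omega^{-j}$ cycle through $2,1,-1,-2,-1,1$ according to $j\bmod 6$. Writing $q=6k+5$, a direct count of the residues of $j\in\{1,\ldots,q-2\}$ modulo $6$ gives $\prod_{j=1}^{q-2}(\omega^j+\omega^{-j})=2^{2k+1}(-1)^k$, so
$$\det M=(-1)^{k+1}\,2^{2k+1}=(-1)^{(q+1)/2}\cdot 2^{(q-2)/3}$$
(since $(q+1)/2=3(k+1)$ has the same parity as $k+1$, and $(q-2)/3=2k+1$). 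I expect the main obstacles to be the trivial-character case in the character-sum evaluation, where the extra term $-\sum_g\chi_0(g)=1$ makes $T(\chi_0)$ not fit the general pattern and forces a separate calculation, and the modulo-$6$ bookkeeping that delivers the final sign; once these are handled the rest of the computation is routine.
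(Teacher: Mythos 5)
The paper does not actually prove this statement: it is quoted as a known theorem from the reference [WSN] (Wu, She and Ni, \emph{Bull.\ Malays.\ Math.\ Sci.\ Soc.} 45 (2022)), so there is no in-paper proof to compare against. Judged on its own, your argument is correct and complete. The decomposition $\beta_i^2-\beta_i\beta_j+\beta_j^2=\beta_i\beta_j(\beta_i\beta_j^{-1}+\beta_j\beta_i^{-1}-1)$, the reduction to the circulant $[h(\alpha^{i-j})]$ (legitimate since reordering the $\beta_i$ permutes rows and columns identically, and since $\alpha$ is itself a primitive $(q-1)$-th root of unity in $\mathbb{F}_q$, so the Vandermonde diagonalization applies), and the relabeling $\chi\mapsto\chi\omega_0$ are all sound; the factor $(\prod_i\beta_i)^{-2}=1$ from the diagonal scalings is handled correctly. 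The evaluation of $\sum_{g}\chi_j(g)/(g-a)$ via $x^q-x-(a^q-a)=(x-a)R(x)$ checks out, including the exceptional case $j=0$, and I verified $T(\chi_j)=(\omega^j+\omega^{-j})/3$ for $1\le j\le q-2$, $T(\chi_0)=-1/3$, the cancellation $3^{q-1}=1$ in $\mathbb{F}_p$, and the modulo-$6$ count giving $\prod_{j=1}^{q-2}(\omega^j+\omega^{-j})=2^{2k+1}(-1)^k$ for $q=6k+5$, whence $\det M=(-1)^{k+1}2^{2k+1}=(-1)^{(q+1)/2}2^{(q-2)/3}$ as claimed. Methodologically your route is in the same spirit as the tools the present paper does use for its own theorems (pulling out diagonal factors, reducing to a matrix in $a_j/a_i$, and factoring via circulants as in Lemma \ref{lemma Wu ffa} and Lemma \ref{Lem. sum of rth power of x over Fq}); the main extra ingredient you supply is the explicit partial-fraction evaluation of the character sums over $\mathbb{F}_{q^2}$, which replaces the polynomial-coefficient bookkeeping (Lemma \ref{Lem. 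K}) with closed-form values and yields the exact constant rather than just a quadratic-residue statement.
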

	
	Recently, Luo and Sun \cite{Luo-Sun} investigated the determinant
	\begin{equation}\label{Eq. Sun determinant C}
		\det S_p(c,d)=\det\left[\left(i^2+cij+dj^2\right)^{p-2}\right]_{1\le i,j\le p-1}.
	\end{equation}
	For $(c,d)=(1,1)$ or $(2,2)$, they determined the explicit values of $(\frac{\det S_p(c,d)}{p})$. 
	
    Motivated by Sun's determinants (\ref{Eq. Sun determinant A})--(\ref{Eq. Sun determinant C}) and the above discussions, in this paper, we also consider the matrix 
	$$T_k:=\left[\left(a_i^2+a_ia_j+a_j^2\right)^{(q-3)/2}\right]_{1\le i,j\le k}.$$
	
	We will state our results concerning $T_k$ in Section 1.3. 
	
	\subsection{The Main Results involving $\det D_k$}
	\begin{theorem}\label{Thm. the determinant of Dk}
		Let $\mathbb{F}_q$ be the finite field of $q$ elements with $\char(\mathbb{F}_q)=p>2$. Then for any integer $k\mid q-1$ with $1<k\le q-1$ we have 
		$$\det D_k=(-1)^{(k+1)(q-3)/2}\cdot w_k\cdot k^k\in\mathbb{F}_p,$$
		where 
		$$w_k=\prod_{s=0}^{k-1}\sum_{r=0}^{\lfloor\frac{q-3-2s}{2k}\rfloor}	\binom{(q-3)/2}{s+rk}\in\mathbb{F}_p.$$
	\end{theorem}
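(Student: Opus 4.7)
Let $N=(q-3)/2$. The plan is to expand $(a_i+a_j)^N$ via the binomial theorem and collapse the expansion using the fact that $a^k=1$ for every $a\in U_k$. Writing each exponent $m\in\{0,1,\dots,N\}$ as $m=s+rk$ with $0\le s\le k-1$, I would first establish the identity
\begin{equation*}
(a_i+a_j)^N=\sum_{s=0}^{k-1}c_s\,a_i^{s}\,a_j^{(N-s)\bmod k},
\qquad c_s=\sum_{r=0}^{\lfloor (N-s)/k\rfloor}\binom{N}{s+rk},
\end{equation*}
so that $\prod_s c_s=w_k$ exactly matches the $w_k$ in the statement.

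Next I would package this identity as a matrix factorization. Let $V=[a_i^{s}]_{1\le i\le k,\,0\le s\le k-1}$, let $C=\mathrm{diag}(c_0,\dots,c_{k-1})$, and let $P$ be the permutation matrix of the involution $\pi\colon s\mapsto (N-s)\bmod k$ on $\{0,1,\dots,k-1\}$. The identity above is precisely the entrywise statement that $D_k=V\cdot CP\cdot V^{T}$, and hence
\begin{equation*}
\det D_k=(\det V)^{2}\cdot w_k\cdot \sgn(\pi).
\end{equation*}
The factor $(\det V)^{2}$ is the discriminant of $X^k-1$, which I would compute via the resultant formula $\mathrm{disc}(f)=(-1)^{k(k-1)/2}\prod_i f'(a_i)$ applied to $f(X)=X^k-1$. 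Using $f'(a_i)=ka_i^{k-1}=k/a_i$ together with $\prod_i a_i=(-1)^{k-1}$ (the negative of the constant term of $X^k-1$), this yields $(\det V)^{2}=(-1)^{(k-1)(k+2)/2}k^{k}$.

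The main obstacle is reconciling the two sign factors $(-1)^{(k-1)(k+2)/2}$ and $\sgn(\pi)$ with the claimed exponent $(k+1)N$. Since $\pi$ is an involution, $\sgn(\pi)=(-1)^{(k-f)/2}$ where $f=\#\{s:2s\equiv N\pmod k\}$; I would case-split according to the parities of $k$ and $N$: if $k$ is odd then $f=1$; if $k$ is even and $N$ is even then $f=2$; if $k$ is even and $N$ is odd then $f=0$. A short direct parity check in each of the four cases confirms the congruence
\begin{equation*}
\tfrac{(k-1)(k+2)}{2}+\tfrac{k-f}{2}\equiv (k+1)N\pmod 2,
\end{equation*}
which completes the identification $\det D_k=(-1)^{(k+1)N}w_k k^k$.

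I expect this last parity bookkeeping to be the only genuinely delicate step; everything preceding it is a clean linear-algebra manipulation that works over $\mathbb{F}_p$ without change. (A useful sanity check I would keep in mind: the formula should remain valid when some $c_s=0$, in which case both sides vanish; and the small cases $k=2,q=5$ and $k=3,q=7$ can be verified by direct computation.)
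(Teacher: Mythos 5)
Your argument is correct --- I checked each step, including the identity $(a_i+a_j)^N=\sum_{s}c_s\,a_i^{s}a_j^{(N-s)\bmod k}$, the factorization $D_k=V(CP)V^{T}$, the value $(\det V)^2=(-1)^{(k-1)(k+2)/2}k^k$, and all four cases of the final parity congruence --- but it is organized differently from the paper's proof. The paper first pulls $a_i^{(q-3)/2}$ out of row $i$, which produces the sign $(-1)^{(k+1)(q-3)/2}$ at once via $\prod_i a_i=(-1)^{k+1}$ and turns the $(i,j)$-entry into $f_k(a_j/a_i)$ for a single polynomial $f_k$ of degree $<k$; it then invokes Krattenthaler's identity $\det[P(X_iY_j)]_{i,j}=\prod_s p_s\cdot\prod_{i<j}(X_j-X_i)(Y_j-Y_i)$ with $X_i=1/a_i$, $Y_j=a_j$, and computes $\prod_{i<j}(a_j-a_i)(a_j^{-1}-a_i^{-1})=k^k$ in a separate lemma via $\prod_j G_k'(a_j)$ for $G_k(t)=t^k-1$, so no permutation sign ever arises. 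You instead keep the symmetric form $(a_i+a_j)^N$, which amounts to re-proving Krattenthaler's lemma in a symmetric variant; the price is having to evaluate $\sgn(\pi)$ for the involution $s\mapsto(N-s)\bmod k$ and the sign of ${\rm disc}(X^k-1)$ separately and then reconcile them with $(-1)^{(k+1)N}$ by a case analysis --- the genuinely delicate step you correctly identified, and one the paper's row-normalization makes disappear entirely. One cosmetic slip: $\prod_i a_i$ is $(-1)^k$ times the constant term of $X^k-1$, hence $(-1)^{k+1}=(-1)^{k-1}$; calling it ``the negative of the constant term'' is only literally right for odd $k$, though the value you actually use is correct in all cases.
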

	
	Suppose now that $k=(q-1)/2$, i.e., $U_{(q-1)/2}$ is the set of all non-zero squares over $\mathbb{F}_q$. Then we can obtain the following simplified result which will be proved in Section 2. 
	
	\begin{corollary}\label{Cor. B of Thm.1.1}
		Let $\mathbb{F}_q$ be the finite field of $q$ elements with $\char(\mathbb{F}_q)=p>2$. Then 
		$$\det D_{(q-1)/2}=
		\begin{cases}
	(-1)^{\frac{q+3}{4}}u^2                         & \mbox{if}\ q\equiv 1\pmod4,\\
	(-1)^{\frac{q+5}{4}}\binom{(q-3)/2}{(q-3)/4}v^2 & \mbox{if}\ q\equiv 3\pmod4\ \text{and}\ q>3,
		\end{cases}$$
	where $u,v\in\mathbb{F}_p$ are defined by 
	$$u=\prod_{s=0}^{(q-5)/4}\binom{(q-3)/2}{s},\ \text{and}\ v=\prod_{s=0}^{(q-7)/4}\binom{(q-3)/2}{s}.$$
	In particular, if $q=p>3$ is an odd prime, then $D_{(p-1)/2}$ is non-singular and 
	$$\left(\frac{\det D_{(p-1)/2}}{p}\right)=
	\begin{cases}
		1 & \mbox{if}\ p\equiv1\pmod4,\\
		(-1)^{(h(-p)+1)/2} & \mbox{if}\ p\equiv 3\pmod4\ \text{and}\ p>3,
	\end{cases}$$
where $h(-p)$ is the class number of $\mathbb{Q}(\sqrt{-p})$. 
	\end{corollary}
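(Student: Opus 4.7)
The plan is to derive the corollary as a specialization of Theorem~\ref{Thm. the determinant of Dk} to $k=(q-1)/2$, followed by a symmetry rewriting of the weight $w_k$ and a careful identification of the scalar prefactor.

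First I reduce $w_{(q-1)/2}$ to a single product of binomials. For $0\le s\le(q-3)/2$ one has $q-3-2s\le q-3<q-1=2k$, so $\lfloor(q-3-2s)/(2k)\rfloor=0$ and only the $r=0$ term survives in the inner sums of $w_k$. Hence
$$w_{(q-1)/2}=\prod_{s=0}^{(q-3)/2}\binom{(q-3)/2}{s}.$$
Writing $n:=(q-3)/2$ and using $\binom{n}{s}=\binom{n}{n-s}$, the factors pair up: when $q\equiv1\pmod4$, $n$ is odd, all factors pair off, and $w_{(q-1)/2}=u^2$; when $q\equiv3\pmod4$ (so $q>3$ forces $n\ge2$), $n$ is even and the unpaired middle factor $\binom{n}{n/2}=\binom{(q-3)/2}{(q-3)/4}$ yields $w_{(q-1)/2}=\binom{(q-3)/2}{(q-3)/4}\,v^2$.

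Next I identify the scalar prefactor $(-1)^{(k+1)(q-3)/2}k^k\in\mathbb{F}_p$. Writing $q=p^r$ and noting $k\equiv-1/2\pmod p$, Fermat's little theorem gives $k^k\equiv1\pmod p$ when $r$ is even (since $(q-1)/2$ is then a multiple of $p-1$) and $k^k\equiv(-1)^{(p-1)/2}(2/p)\pmod p$ when $r$ is odd. A direct parity check shows $(k+1)(q-3)/2=(q+1)(q-3)/4$ is odd precisely when $q\equiv1\pmod4$. Combining these with the supplementary law $(2/p)=(-1)^{(p^2-1)/8}$ and examining $q\pmod 8$ case by case (noting $q\equiv1\pmod8$ is automatic when $r$ is even) shows this scalar equals $(-1)^{(q+3)/4}$ for $q\equiv1\pmod4$ and $(-1)^{(q+5)/4}$ for $q\equiv3\pmod4$. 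This establishes the explicit formula for $\det D_{(q-1)/2}$.

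For the second statement, assume $q=p>3$ is prime. Since $(p-3)/2<p$, the integer binomial coefficients defining $u$ and $v$ are all coprime to $p$, so $u,v\not\equiv0\pmod p$ and $D_{(p-1)/2}$ is non-singular. If $p\equiv1\pmod4$, then $-1$ is a QR modulo $p$ and $u^2$ is a nonzero square, so $(\det D_{(p-1)/2}/p)=1$. If $p\equiv3\pmod4$, the key is to compute $\big(\binom{(p-3)/2}{(p-3)/4}/p\big)$; rewriting the middle binomial as $((p-1)/2)!/((p-1)/2\cdot((p-3)/4)!^2)$, the squared denominator factor contributes symbol $+1$, the factor $(p-1)/2\equiv-1/2\pmod p$ contributes $-(2/p)$, and $((p-1)/2)!$ is handled by Mordell's classical congruence $((p-1)/2)!\equiv(-1)^{(h(-p)+1)/2}\pmod p$ (valid precisely for $p\equiv3\pmod4$, $p>3$). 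The residual sign $-(2/p)\cdot(-1)^{(p+5)/4}$ is verified to be $+1$ in both subcases $p\equiv3,7\pmod8$ via the supplementary law, leaving exactly $(-1)^{(h(-p)+1)/2}$. The two delicate points, forming the main obstacle, are this modulo-$8$ case analysis and the sign bookkeeping around Mordell's congruence; everything else is routine manipulation once Theorem~\ref{Thm. the determinant of Dk} is in hand.
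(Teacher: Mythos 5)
Your proposal is correct and follows essentially the same route as the paper: specialize Theorem \ref{Thm. the determinant of Dk} to $k=(q-1)/2$, collapse $w_k$ to $\prod_{s=0}^{(q-3)/2}\binom{(q-3)/2}{s}$, pair the binomials via symmetry, evaluate the scalar prefactor, and finish the prime case with the identity $\binom{(p-3)/2}{(p-3)/4}=((p-1)/2)!\,/\,\big(\tfrac{p-1}{2}((p-3)/4)!^2\big)$, the relation $(-2/p)=(-1)^{(p+5)/4}$, and Mordell's congruence. The only divergence is in computing $2^{(q-1)/2}\in\mathbb{F}_q$: you reduce to $\mathbb{F}_p$ via Fermat and a $q\bmod 8$ case analysis, whereas the paper uses the factorization $2=-\sqrt{-1}\,(1+\sqrt{-1})^2$ and Frobenius; both are valid and give $\phi(2)=(-1)^{(q^2-1)/8}$.
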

	
	From Theorem \ref{Thm. the determinant of Dk} we see that $\det D_k\in\mathbb{F}_p$. The next result gives the explicit value of $(\frac{\det D_k}{p})$ when $k$ is odd. 
	
	\begin{theorem}\label{Thm. the legendre symbol of Dk, k odd}
		Let $\mathbb{F}_q$ be the finite field of $q$ elements with $\char(\mathbb{F}_q)=p>2$. Let $1<k\le q-1$ be an odd integer with $k\mid q-1$. Suppose that $D_k$ is non-singular. Then 
		$$\left(\frac{\det D_k}{p}\right)=\left(\frac{s_k}{p}\right),$$
		where 
		$$s_k:=k\sum_{r=1}^{\frac{q-1}{2k}}\binom{(q-3)/2}{((2r-1)k-1)/2}\in\mathbb{F}_p.$$
	\end{theorem}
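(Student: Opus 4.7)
The plan is to reduce the Legendre symbol of $\det D_k$ to that of a single inner sum $W_{(k-1)/2}$ (defined below) by means of a palindromic symmetry among the factors of $w_k$.

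First, I would apply Theorem~\ref{Thm. the determinant of Dk}: since $k$ is odd, $k+1$ is even and $(-1)^{(k+1)(q-3)/2}=1$, so $\det D_k = w_k k^k$ in $\mathbb{F}_p$. Moreover $\gcd(k,p)=1$ (as $k\mid q-1$), and $(k^k/p)=(k/p)$ since $k$ is odd. Introducing
\[
W_s:=\sum_{r=0}^{\lfloor((q-3)/2-s)/k\rfloor}\binom{(q-3)/2}{s+rk}\in\mathbb{F}_p \qquad (0\le s\le k-1),
\]
so that $w_k=\prod_{s=0}^{k-1}W_s$, it then suffices to establish two things: (i) $(w_k/p)=(W_{(k-1)/2}/p)$, and (ii) $s_k=k\cdot W_{(k-1)/2}$.

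The heart of the argument is the identity $W_s=W_{k-1-s}$ for $0\le s\le k-1$. I would use that $k$ is odd, so $2$ is invertible modulo $k$; combined with $q-3\equiv-2\pmod k$ this yields $(q-3)/2\equiv-1\pmod k$. Writing $(q-3)/2=Nk+(k-1)$ with $N=(q-1)/(2k)-1$, one finds that the upper summation bound in $W_s$ is exactly $N$ for every $s\in\{0,\ldots,k-1\}$. Combining the binomial reflection $\binom{(q-3)/2}{j}=\binom{(q-3)/2}{(q-3)/2-j}$ with the substitution $r\mapsto N-r$ sends the $r$-th term of $W_s$ to the $(N-r)$-th term of $W_{k-1-s}$, proving $W_s=W_{k-1-s}$.

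Since $k$ is odd, the involution $s\mapsto k-1-s$ on $\{0,\ldots,k-1\}$ has the unique fixed point $s=(k-1)/2$, so
\[
w_k=W_{(k-1)/2}\cdot\prod_{s=0}^{(k-3)/2}W_s^2.
\]
The hypothesis that $D_k$ is non-singular forces $w_k\ne 0$, hence each $W_s\ne 0$ in $\mathbb{F}_p$; the square factor is then a nonzero square in $\mathbb{F}_p$, giving (i). For (ii), a direct re-indexing $r\mapsto r-1$ in the definition of $s_k$ shows that the indices $((2r-1)k-1)/2$ for $r=1,\ldots,(q-1)/(2k)$ are exactly $(k-1)/2+r'k$ for $r'=0,\ldots,N$, whence $s_k=k\cdot W_{(k-1)/2}$. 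Combining (i) and (ii) yields
\[
\left(\frac{\det D_k}{p}\right)=\left(\frac{k}{p}\right)\left(\frac{W_{(k-1)/2}}{p}\right)=\left(\frac{s_k}{p}\right),
\]
as required.

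The main obstacle is securing the identity $W_s=W_{k-1-s}$, which depends critically on the congruence $(q-3)/2\equiv -1\pmod k$; this congruence would fail in general for even $k$ (where $2$ is not invertible modulo $k$), so the binomial reflection would not permute the $W_s$ among themselves and no such pairing would exist. This is the structural reason the theorem is restricted to odd $k$.
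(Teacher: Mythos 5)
Your proof is correct, and it takes a genuinely different route from the paper's. The paper never touches the product formula of Theorem \ref{Thm. the determinant of Dk} in this argument: it instead rewrites $\det D_k$ as the determinant of a symmetric circulant matrix $C(b_0,\ldots,b_{k-1})$ with $b_i=(1+g^{ni})^{(q-3)/2}g^{mi}(-1)^i$ and $b_i=b_{k-i}$, invokes Lemma \ref{lemma Wu ffa} to get $\det D_k=\bigl(\sum_i b_i\bigr)v^2$ for odd $k$, and then evaluates $\sum_i b_i=s_k$ by converting the sum over $U_k$ into a power sum over all of $\mathbb{F}_q$ via Lemma \ref{Lem. sum of rth power of x over Fq}. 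You instead exploit the reflection $\binom{(q-3)/2}{j}=\binom{(q-3)/2}{(q-3)/2-j}$ together with the congruence $(q-3)/2\equiv -1\pmod k$ (valid precisely because $k$ is odd) to show that the factors of $w_k$ satisfy $W_s=W_{k-1-s}$, pair off into squares, and leave exactly the middle factor $W_{(k-1)/2}=s_k/k$; your verifications check out (the upper limit of each $W_s$ is the constant $N=(q-1)/(2k)-1$, and $((2r-1)k-1)/2=(k-1)/2+(r-1)k$). Your route is more elementary, explains why $s_k$ already appears as a factor of $w_k$ in Theorem \ref{Thm. the determinant of Dk}, and produces an explicit square root in $\mathbb{F}_p$, namely $k^{(k-1)/2}\prod_{s=0}^{(k-3)/2}W_s$ --- a small bonus over the paper, whose lemma only supplies some $v\in\mathbb{F}_q$ with $v^2=\det D_k/s_k$. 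The paper's circulant method, on the other hand, does not require an explicit coefficient-product formula to be available and (via the even-$s$ case of Lemma \ref{lemma Wu ffa}) is the tool one would reach for to attack the analogous question for even $k$ or for the matrices $T_k$.
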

	
	\subsection{The Main Results involving $\det T_k$}
	To state next results, we need to introduce some basic properties of trinomial coefficients. Let $n$ be a positive integer. For any integer $r$, the trinomial coefficient $\binom{n}{r}_2$ is defined by 
	$$\left(x+\frac{1}{x}+1\right)^{n}=\sum_{r=-\infty}^{+\infty}\binom{n}{r}_2x^r.$$
	This implies that $\binom{n}{r}_2=0$ whenever $|r|>n$ and that $\binom{n}{r}_2=\binom{n}{-r}_2$ for any integer $r$. In particular, $\binom{n}{0}_2$ is usually called the central trinomial coefficient because $\binom{n}{0}_2$ is exactly the coefficient of $x^n$ in the polynomial $(x^2+x+1)^n$. For simplicity, $\binom{n}{0}_2$ is also denoted by $t_n$. 
	
		\begin{theorem}\label{Thm. the determinant of Tk}
		Let $\mathbb{F}_q$ be the finite field of $q$ elements with $\char(\mathbb{F}_q)=p>2$. Then for any integer $k\mid q-1$ with $1<k\le q-1$ we have 
		$$\det T_k=l_k\cdot k^k\in\mathbb{F}_p,$$
		where 
		$$l_k=\prod_{s=0}^{k-1}\sum_{r=0}^{\lfloor\frac{q-3-s}{k}\rfloor}	\binom{(q-3)/2}{(q-3)/2-s-kr}_2\in\mathbb{F}_p.$$
	\end{theorem}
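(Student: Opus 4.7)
The plan is to exploit the multiplicative group structure of $U_k=\{a_1,\ldots,a_k\}$, a cyclic subgroup of $\mathbb{F}_q^{\times}$ of order $k$, so that $T_k$ becomes (essentially) diagonalisable by the discrete Fourier transform on $U_k$. The key algebraic identity is the factorisation
\begin{equation*}
a_i^2+a_ia_j+a_j^2 \;=\; a_ia_j\Bigl(\tfrac{a_i}{a_j}+1+\tfrac{a_j}{a_i}\Bigr),
\end{equation*}
which, after raising to the power $(q-3)/2$, lets us write $T_k=D\,U\,D$ with
\begin{equation*}
D=\mathrm{diag}\bigl(a_1^{(q-3)/2},\ldots,a_k^{(q-3)/2}\bigr),\qquad
U[i,j]=\Bigl(\tfrac{a_i}{a_j}+1+\tfrac{a_j}{a_i}\Bigr)^{(q-3)/2}.
\end{equation*}
Since $\prod_{a\in U_k}a=(-1)^{k+1}$, we have $(\det D)^2=1$, so the problem reduces to computing $\det U$. (This is also why no $(-1)^{(k+1)(q-3)/2}$ sign appears here, in contrast with Theorem \ref{Thm. the determinant of Dk}.)

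Next I would expand $U[i,j]$ using the defining generating function of the trinomial coefficients, and collect terms by the residue of the exponent modulo $k$. Because $a_i^k=a_j^k=1$,
\begin{equation*}
U[i,j]\;=\;\sum_{r}\binom{(q-3)/2}{r}_{2}\Bigl(\tfrac{a_i}{a_j}\Bigr)^{r}
\;=\;\sum_{s=0}^{k-1}L_s\,a_i^{\,s}\,a_j^{-s},\qquad
L_s\;:=\;\sum_{r\equiv s\pmod k}\binom{(q-3)/2}{r}_{2}.
\end{equation*}
In matrix form this reads $U=V\cdot\mathrm{diag}(L_0,\ldots,L_{k-1})\cdot W$ with $V[i,s]=a_i^{\,s}$ and $W[s,j]=a_j^{-s}$. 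The character orthogonality on $U_k$---that $\sum_{s=0}^{k-1}c^{s}$ equals $k$ if $c=1$ and $0$ if $c\in U_k\setminus\{1\}$---gives $VW=k\,I_k$, whence $\det V\cdot\det W=k^{k}$ and therefore
\begin{equation*}
\det T_k\;=\;\det U\;=\;k^{k}\prod_{s=0}^{k-1}L_s.
\end{equation*}

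The remaining task is to match $\prod_{s=0}^{k-1}L_s$ with the product $l_k$ appearing in the theorem. Reindexing the inner sum in the definition of $l_k$ by $m=(q-3)/2-s-kr$, the range $0\le r\le\lfloor(q-3-s)/k\rfloor$ translates to $m$ running through the integers $\equiv(q-3)/2-s\pmod k$ with $-(q-3)/2\le m\le (q-3)/2-s$. Since $0\le s<k$, no integer $\equiv(q-3)/2-s\pmod k$ lies in the interval $\bigl((q-3)/2-s,\,(q-3)/2\bigr]$, and $\binom{(q-3)/2}{m}_2=0$ for $|m|>(q-3)/2$; hence the inner sum equals $L_{(q-3)/2-s\bmod k}$. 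Because $s\mapsto (q-3)/2-s\bmod k$ is a bijection of $\{0,1,\ldots,k-1\}$, we obtain $\prod_{s}L_s=l_k$, which finishes the proof. I expect the main obstacle to be precisely this last bookkeeping step: one must verify that truncating the sum at $r=\lfloor(q-3-s)/k\rfloor$ (rather than symmetrising about $r=0$) captures exactly one full residue class modulo $k$ without losing any nonzero trinomial coefficient. The preceding matrix factorisation and DFT diagonalisation are routine once the identity $T_k=DUD$ has been spotted.
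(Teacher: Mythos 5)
Your proof is correct and follows essentially the same route as the paper's: factor out the diagonal $D$ (whose determinant squares to $1$), reduce each entry of $U$ to a polynomial of degree $<k$ in $a_i/a_j$ by collecting trinomial coefficients in residue classes modulo $k$, and evaluate the resulting determinant as $k^k$ times the product of the $k$ coefficients. The only cosmetic difference is that the paper cites Krattenthaler's lemma plus a separate computation of $\prod_{1\le i<j\le k}(a_j-a_i)(a_j^{-1}-a_i^{-1})=k^k$ via the derivative of $t^k-1$, whereas you get the same factor directly from $VW=kI_k$; your concluding reindexing that identifies $\prod_{s}L_s$ with $l_k$ is also sound.
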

	
	As a direct consequence of Theorem \ref{Thm. the determinant of Tk} we have the following result.
	\begin{corollary}\label{Cor. A of Thm.1.3}
		Let $\mathbb{F}_q$ be the finite field of $q$ elements with $\char(\mathbb{F}_q)=p>2$. For any integer $k\mid q-1$ with $1<k\le q-1$, the matrix $T_k$ is singular over $\mathbb{F}_q$ if and only if 
		$$ \sum_{r=0}^{\lfloor\frac{q-3-s}{k}\rfloor}	\binom{(q-3)/2}{(q-3)/2-s-kr}_2\equiv0\pmod p$$
		for some $0\le s\le k-1$. In particular, $T_{q-1}$ is a singular matrix over $\mathbb{F}_q$.
	\end{corollary}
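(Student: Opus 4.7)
The plan is to extract the corollary directly from Theorem \ref{Thm. the determinant of Tk}. Since $k\mid q-1$ and $q$ is a power of $p=\char(\mathbb{F}_q)$, we have $p\nmid q-1$, so $\gcd(k,p)=1$ and $k^k$ is a unit in $\mathbb{F}_p$. Combined with the formula $\det T_k = l_k\cdot k^k$, this shows at once that $T_k$ is singular over $\mathbb{F}_q$ if and only if $l_k=0$ in $\mathbb{F}_p$. As $l_k$ is by definition a product of $k$ factors indexed by $s=0,1,\ldots,k-1$, the latter condition is equivalent to the vanishing of at least one such factor modulo $p$, which is the stated criterion.

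For the final assertion that $T_{q-1}$ is singular, I would take $k=q-1$ and focus on the factor of $l_{q-1}$ corresponding to $s=q-2$. Here $(q-3-s)/k=-1/(q-1)\in(-1,0)$, so $\lfloor(q-3-s)/k\rfloor=-1$ and the corresponding sum is empty, hence equals $0$ in $\mathbb{F}_p$. Equivalently, the only formally available summand (at $r=0$) would be $\binom{(q-3)/2}{-(q-1)/2}_2=\binom{(q-3)/2}{(q-1)/2}_2$, which vanishes because $(q-1)/2>(q-3)/2$ pushes the index outside the support of the trinomial coefficient. Either way the $s=q-2$ factor of $l_{q-1}$ is $0$, so $l_{q-1}=0$ and hence $\det T_{q-1}=0$.

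There is essentially no obstacle here beyond invoking Theorem \ref{Thm. the determinant of Tk} itself: the main point is the observation that $k$ is invertible modulo $p$, so singularity of $T_k$ is controlled entirely by $l_k$. The only mild subtlety is the empty-sum edge case at $s=q-2$ in the special case $k=q-1$, which is resolved cleanly either by the standard convention $\sum_{r=0}^{-1}=0$ or by the support property of the trinomial coefficients.
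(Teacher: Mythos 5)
Your proposal is correct and follows exactly the route the paper intends: the paper states this corollary as a direct consequence of Theorem \ref{Thm. the determinant of Tk} without further argument, the point being precisely that $k^k$ is a unit in $\mathbb{F}_p$ (as $k\mid q-1$ and $p\nmid q-1$), so singularity is equivalent to the vanishing of some factor of $l_k$. Your handling of the $k=q-1$ case via the empty sum at $s=q-2$ (equivalently, the support property $\binom{n}{r}_2=0$ for $|r|>n$) is the right justification for the final assertion.
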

	
	In the case $k=(q-1)/2$, similar to Corollary \ref{Cor. B of Thm.1.1}, by Theorem \ref{Thm. the determinant of Tk} we directly have the following simplified result. 
	
	\begin{corollary}\label{Cor. C of Thm.1.3}
		Let $\mathbb{F}_q$ be the finite field of $q$ elements with $\char(\mathbb{F}_q)=p>2$. 
		
		{\rm (i)} If $q\equiv1\pmod4$, then 
		$$\det T_{(q-1)/2}=
		\prod_{s=0}^{(q-5)/4}
		\left(\binom{(q-3)/2}{(q-3)/2-s}_2+\binom{(q-3)/2}{1+s}_2\right)^2.$$
		
		{\rm (ii)} If $q\equiv3\pmod4$ and $q>3$, then 
		$$\det T_{(q-1)/2}=
		\binom{(q-3)/2}{0}_2\prod_{s=0}^{(q-7)/4}
		\left(\binom{(q-3)/2}{(q-3)/2-s}_2+\binom{(q-3)/2}{1+s}_2\right)^2.$$
		
		In particular, if $T_{(q-1)/2}$ is non-singular, then 
		$$
		\left(\frac{\det T_{(q-1)/2}}{p}\right)=\begin{cases}
			(-1)^{(q-1)/4}                                   & \mbox{if}\ q\equiv1\pmod4,\\
			(\frac{t_{(q-3)/2}}{p})(-1)^{(q+5)/4}            & \mbox{if}\ q\equiv3\pmod4\ \text{and}\ q>3.
		\end{cases}
		$$
	\end{corollary}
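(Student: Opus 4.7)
The plan is to derive Corollary~\ref{Cor. C of Thm.1.3} directly from Theorem~\ref{Thm. the determinant of Tk} by analysing the shape of $l_{(q-1)/2}$ and pairing its factors. Specialising Theorem~\ref{Thm. the determinant of Tk} to $k=(q-1)/2$, the upper limit $\lfloor(q-3-s)/k\rfloor$ of the inner sum is at most $1$ because $2k=q-1$; in fact the $r=1$ term is present exactly when $0\le s\le(q-5)/2$. Combined with the symmetry $\binom{n}{j}_2=\binom{n}{-j}_2$, the inner sum becomes
\[
\binom{(q-3)/2}{(q-3)/2-s}_2+\binom{(q-3)/2}{s+1}_2
\]
for $0\le s\le(q-5)/2$, while for $s=(q-3)/2$ it collapses to $\binom{(q-3)/2}{0}_2=t_{(q-3)/2}$.

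Next I exploit the involution $\sigma : s\mapsto(q-5)/2-s$ on the index set $\{0,1,\ldots,(q-5)/2\}$. Substituting $s'=\sigma(s)$ in the displayed expression and using $\binom{n}{j}_2=\binom{n}{-j}_2$ shows the inner sums at $s$ and $\sigma(s)$ coincide, so every $\sigma$-orbit of size two contributes a perfect square to $l_{(q-1)/2}$. The structure of $\sigma$ depends on the parity of $(q-5)/2$. If $q\equiv 1\pmod 4$, then $(q-5)/2$ is even, $\sigma$ has a unique fixed point at $s=(q-5)/4$ producing the leftover factor $2\binom{(q-3)/2}{(q-1)/4}_2$, and the two-element orbits cover $0\le s\le(q-9)/4$. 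If $q\equiv 3\pmod 4$, then $(q-5)/2$ is odd, $\sigma$ is fixed-point-free, and the two-element orbits cover $0\le s\le(q-7)/4$. Multiplying the squared pair-contributions by the unpaired factors (including the $s=(q-3)/2$ contribution $t_{(q-3)/2}$) and the factor $k^k$ from Theorem~\ref{Thm. the determinant of Tk} yields the asserted product expressions for $\det T_{(q-1)/2}$ in parts (i) and (ii).

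For the Legendre symbol assertion, the squared pair-contributions drop out, so $(\det T_{(q-1)/2}/p)$ is the symbol of $k^k$ times those of the unpaired factors. Since $q\equiv 1\pmod p$, one has $(q-1)/2\equiv(p-1)/2\pmod p$, so $(((q-1)/2)^{(q-1)/2}/p)$ reduces via Euler's criterion to an elementary product of $(-1/p)$ and $(2/p)$. A case split according to $q\bmod 4$ then converts everything into the closed forms $(-1)^{(q-1)/4}$ in part (i) and $(t_{(q-3)/2}/p)(-1)^{(q+5)/4}$ in part (ii).

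The main obstacle I anticipate is the careful accounting of the non-square contributions. In particular, for $q\equiv 1\pmod 4$ one must verify that the middle factor $2\binom{(q-3)/2}{(q-1)/4}_2$, together with $t_{(q-3)/2}$ and $k^k$, has combined Legendre symbol $(-1)^{(q-1)/4}$; this is delicate because the individual trinomial coefficients admit no simple closed-form congruence modulo $p$, so the simplification must come from a global identity across $s$ rather than evaluating any single factor.
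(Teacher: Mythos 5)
Your reduction is exactly the route the paper intends (the paper offers no argument at all beyond ``by Theorem \ref{Thm. the determinant of Tk} we directly have the following simplified result''): specialize $l_k$ at $k=(q-1)/2$, note the inner sum has only the terms $r=0,1$ (and only $r=0$ when $s=(q-3)/2$), use $\binom{n}{j}_2=\binom{n}{-j}_2$ to rewrite it as $c_s=\binom{(q-3)/2}{(q-3)/2-s}_2+\binom{(q-3)/2}{1+s}_2$ for $0\le s\le(q-5)/2$ and as $t_{(q-3)/2}$ for $s=(q-3)/2$, and pair the factors via $c_s=c_{(q-5)/2-s}$. All of that is correct. The gap is the sentence claiming that multiplying the squared pair-contributions by the unpaired factors ``yields the asserted product expressions'': it does not. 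For $q\equiv1\pmod4$ your own bookkeeping gives $\det T_{(q-1)/2}=k^k\,t_{(q-3)/2}\cdot 2\binom{(q-3)/2}{(q-1)/4}_2\cdot\bigl(\prod_{s=0}^{(q-9)/4}c_s\bigr)^2$, whereas the asserted right-hand side of (i) is $\bigl(2\binom{(q-3)/2}{(q-1)/4}_2\bigr)^2\bigl(\prod_{s=0}^{(q-9)/4}c_s\bigr)^2$; these agree only if $k^k\,t_{(q-3)/2}=2\binom{(q-3)/2}{(q-1)/4}_2$ in $\mathbb{F}_p$, which you never establish. For $q\equiv3\pmod4$ the same accounting leaves a stray factor $k^k$. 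The Legendre-symbol clause, which you explicitly defer to an unnamed ``global identity across $s$'', is precisely the nontrivial content, and nothing in your argument supplies it.

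Moreover the missing identities are false, so the gap cannot be closed: for $q=p=5$ one has $T_2=\left(\begin{smallmatrix}3&1\\1&3\end{smallmatrix}\right)$ and $\det T_2=3$ in $\mathbb{F}_5$, while the product in (i) equals $(1+1)^2=4$; for $q=p=7$, $\det T_3=1$ while (ii) gives $3\cdot3^2\equiv6\pmod 7$, the discrepancy being exactly $k^k=27\equiv-1$; and for $q=p=13$ a direct computation (or Theorem \ref{Thm. the determinant of Tk} itself) gives $\det T_6=3$, a square modulo $13$, while the final clause predicts $(-1)^{(q-1)/4}=-1$. Indeed part (i) is internally inconsistent with the final clause, since a square of an element of $\mathbb{F}_p$ can never have Legendre symbol $-1$. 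The honest endpoint of your (correct) computation is the identity $\det T_{(q-1)/2}=k^k\,t_{(q-3)/2}\prod_{s=0}^{(q-5)/2}c_s$ together with the pairing structure you describe; the closed forms stated in the corollary require additional trinomial-coefficient identities that you leave unproved and that numerical checks refute.
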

	
	\section{Proofs of Theorems \ref{Thm. the determinant of Dk} and Corollary \ref{Cor. B of Thm.1.1}}
	
	We begin with the following result (see \cite[Lemma 10]{K2}).
	\begin{lemma}\label{Lem. K}
		Let $R$ be a commutative ring. Let $P(t)=p_0+p_1t+\cdots+p_{n-1}t^{n-1}\in R[t]$. Then 
		$$\det[P(X_iY_j)]_{1\le i,j\le n}=\prod_{i=0}^{n-1}p_i\times
		\prod_{1\le i<j\le n}\left(X_j-X_i\right)\left(Y_j-Y_i\right).$$
	\end{lemma}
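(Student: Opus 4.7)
The plan is to factor the matrix $[P(X_iY_j)]_{1\le i,j\le n}$ as a product of a Vandermonde matrix, a diagonal matrix, and another Vandermonde matrix, then invoke multiplicativity of the determinant together with the classical Vandermonde evaluation. This works over any commutative ring $R$ since the identities involved are universal polynomial identities.

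First I would expand
\[
P(X_iY_j)=\sum_{k=0}^{n-1}p_k(X_iY_j)^k=\sum_{k=0}^{n-1}X_i^k\cdot p_k\cdot Y_j^k,
\]
and read off the entries to factor the $n\times n$ matrix as
\[
\bigl[P(X_iY_j)\bigr]_{1\le i,j\le n}=V_X\cdot\operatorname{diag}(p_0,p_1,\ldots,p_{n-1})\cdot V_Y^{T},
\]
where $V_X=[X_i^{k}]_{1\le i\le n,\,0\le k\le n-1}$ and $V_Y=[Y_j^{k}]_{1\le j\le n,\,0\le k\le n-1}$ are the standard Vandermonde matrices in the variables $X_i$ and $Y_j$ respectively.

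Next I would take determinants on both sides, using multiplicativity (valid over any commutative ring). The determinant of the diagonal factor is $\prod_{i=0}^{n-1}p_i$, and by the classical Vandermonde identity (which is a polynomial identity with integer coefficients, hence holds in $R$) one has
\[
\det V_X=\prod_{1\le i<j\le n}(X_j-X_i),\qquad \det V_Y^{T}=\det V_Y=\prod_{1\le i<j\le n}(Y_j-Y_i).
\]
Multiplying these three factors together yields exactly the claimed formula.

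There is no real obstacle here: the only point worth double-checking is that the argument is valid over a general commutative ring $R$ rather than a field, but this is automatic because the Leibniz expansion of the determinant, the matrix product identity above, and the Vandermonde evaluation are all universal polynomial identities in $\mathbb{Z}[X_1,\ldots,X_n,Y_1,\ldots,Y_n,p_0,\ldots,p_{n-1}]$, so they specialize to identities in $R$.
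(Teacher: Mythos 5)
Your proof is correct. The paper does not prove this lemma at all --- it simply cites it as Lemma 10 of Krattenthaler's \emph{Advanced determinant calculus: a complement} --- and your factorization $[P(X_iY_j)]=V_X\cdot\operatorname{diag}(p_0,\ldots,p_{n-1})\cdot V_Y^{T}$ followed by multiplicativity of the determinant and the Vandermonde evaluation is exactly the standard argument behind that cited result, valid over any commutative ring for the universality reason you give.
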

	
	We also need the following result.
	\begin{lemma}\label{Lem. product involving ai}
			Let $\mathbb{F}_q$ be the finite field of $q$ elements with $\char(\mathbb{F}_q)=p$. For any positive integer $k\mid q-1$, if we set $U_k=\{a_1,\cdots,a_k\}$, then 
			$$\prod_{1\le i<j\le k}\left(a_j-a_i\right)\left(\frac{1}{a_j}-\frac{1}{a_i}\right)=k^k\in\mathbb{F}_p.$$
	\end{lemma}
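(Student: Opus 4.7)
The plan is to exploit the factorization $x^{k}-1=\prod_{i=1}^{k}(x-a_{i})$, which holds because $U_{k}$ is by definition the zero set of $x^{k}-1$ in $\mathbb{F}_{q}$. Differentiating and substituting $x=a_{i}$, and then using $a_{i}^{k}=1$, gives the workhorse identity
$$\prod_{j\neq i}(a_{i}-a_{j})=ka_{i}^{k-1}=\frac{k}{a_{i}}.$$
Multiplying these identities over $i=1,\ldots,k$ yields
$$\prod_{i=1}^{k}\prod_{j\neq i}(a_{i}-a_{j})=\frac{k^{k}}{\prod_{i=1}^{k}a_{i}}.$$

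Next I would connect this double product to the target. On the one hand, pairing up terms gives $\prod_{i}\prod_{j\neq i}(a_{i}-a_{j})=(-1)^{\binom{k}{2}}\prod_{1\le i<j\le k}(a_{j}-a_{i})^{2}$. On the other hand, using $1/a_{j}-1/a_{i}=-(a_{j}-a_{i})/(a_{i}a_{j})$ and noting that $\prod_{i<j}a_{i}a_{j}=\l(\prod_{i}a_{i}\r)^{k-1}$ (each $a_{i}$ appearing in exactly $k-1$ pairs), the target becomes
$$\prod_{1\le i<j\le k}(a_{j}-a_{i})\l(\frac{1}{a_{j}}-\frac{1}{a_{i}}\r)=(-1)^{\binom{k}{2}}\frac{\prod_{1\le i<j\le k}(a_{j}-a_{i})^{2}}{\l(\prod_{i=1}^{k}a_{i}\r)^{k-1}}.$$
Substituting the expression for $\prod_{i<j}(a_{j}-a_{i})^{2}$ obtained from the previous paragraph makes the two factors $(-1)^{\binom{k}{2}}$ cancel, collapsing the target to $k^{k}/\l(\prod_{i}a_{i}\r)^{k}$.

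The proof finishes by comparing constant terms in $x^{k}-1=\prod_{i}(x-a_{i})$ to obtain $\prod_{i=1}^{k}a_{i}=(-1)^{k+1}$, whence $\l(\prod_{i=1}^{k}a_{i}\r)^{k}=(-1)^{k(k+1)}=1$ because $k(k+1)$ is always even. This produces the stated value $k^{k}$. There is no real obstacle in this argument beyond the careful sign bookkeeping when combining the two expressions for $\prod_{i<j}(a_{j}-a_{i})^{2}$ with the factor $\prod_{i<j}a_{i}a_{j}$; everything else is elementary manipulation of the polynomial $x^{k}-1$ and its derivative.
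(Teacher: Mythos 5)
Your argument is correct and is essentially the paper's own proof: both rest on evaluating $\prod_{j\neq i}(a_i-a_j)$ via the derivative of $t^k-1=\prod_i(t-a_i)$ and on $\prod_i a_i=(-1)^{k+1}$ from the constant term, differing only in sign bookkeeping (you carry explicit $(-1)^{\binom{k}{2}}$ factors that cancel, while the paper absorbs the signs by working with $\prod_{i\neq j}(a_j-a_i)$ directly). No gaps.
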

	
	\begin{proof}
		It is clear that 
		\begin{align}
			\prod_{1\le i<j\le k}\left(a_j-a_i\right)\left(\frac{1}{a_j}-\frac{1}{a_i}\right)
			&=\prod_{1\le i<j\le k}\left(a_j-a_i\right)\left(a_i-a_j\right)/(a_ia_j)\nonumber\\
			&=\prod_{1\le i\neq j\le k}\left(a_j-a_i\right)\times \prod_{1\le i<j\le k}\frac{1}{a_ia_j}\label{Eq. a in the proof of Lem. product involving ai}.
		\end{align}
	
	Let $S_1=\prod_{1\le i\neq j\le k}\left(a_j-a_i\right)$ and let $S_2=\prod_{1\le i<j\le k}\frac{1}{a_ia_j}$. We first consider $S_1$. Let 
	$$G_k(t)=\prod_{i=1}^k(t-a_i)\in\mathbb{F}_q[t],$$
	and let $G'_k(t)$ be the formal derivate of $G_k(t)$. Then by the definition of $U_k$ we see that $G_k(t)=t^k-1$. By this we have $G_k'(t)=kt^{k-1}$ and $\prod_{1\le j\le k}a_j=(-1)^{k+1}$. 
	
	In view of the above, one can verify that 
	\begin{align}
		S_1
		&=\prod_{1\le i\neq j\le k}\left(a_j-a_i\right)=\prod_{1\le j\le k}G_k'(a_j)\nonumber\\
		&=\prod_{1\le j\le k}ka_j^{k-1}\nonumber\\
		&=k^k(-1)^{k+1}.\label{Eq. b in the proof of Lem. product involving ai}
	\end{align}
	
	We turn to $S_2$. It is clear that 
	\begin{equation}\label{Eq. c in the proof of Lem. product involving ai}
		S_2=\prod_{1\le i<j\le k}\frac{1}{a_ia_j}=\prod_{1\le j\le k}\frac{1}{a_j^{k-1}}=(-1)^{k+1}.
	\end{equation}
	Combining Eq. (\ref{Eq. a in the proof of Lem. product involving ai}) with Eq. (\ref{Eq. b in the proof of Lem. product involving ai}) and Eq. (\ref{Eq. c in the proof of Lem. product involving ai}), we obtain 
	$$
	\prod_{1\le i<j\le k}\left(a_j-a_i\right)\left(\frac{1}{a_j}-\frac{1}{a_i}\right)=S_1S_2=k^k\in\mathbb{F}_p.
	$$
	
	This completes the proof.	
	\end{proof}
	
	{\noindent\bf Proof of Theorem \ref{Thm. the determinant of Dk}.} As $\char(\mathbb{F}_q)=p>2$, the subset $\{1,-1\}\subseteq\mathbb{Z}$ can be naturally viewed as a subset of $\mathbb{F}_q$. Now one can verify that 
	\begin{align}
    \det D_k
    &=\det\left[(a_i+a_j)^{\frac{q-3}{2}}\right]_{1\le i,j\le k}\nonumber\\
    &=\prod_{i=1}^ka_i^{\frac{q-3}{2}}\det\left[\left(1+\frac{a_j}{a_i}\right)^{\frac{q-3}{2}}\right]_{1\le i,j\le k}\nonumber\\
    &=(-1)^{(k+1)(q-3)/2}\det\left[\left(1+\frac{a_j}{a_i}\right)^{\frac{q-3}{2}}\right]_{1\le i,j\le k}\label{Eq. A in the proof of Thm.1.1}.
	\end{align}
	
	The last equality follows from $\prod_{1\le j\le k}a_j=(-1)^{k+1}$. Let 
	$$f_k(t)
	=\sum_{s=0}^{k-1}
	\left(\sum_{r=0}^{\lfloor\frac{q-3-2s}{2k}\rfloor}	\binom{(q-3)/2}{s+rk}\right)t^s\in\mathbb{F}_p[t]$$
	with $\deg(f_k)\le k-1$. Noting that $(a_j/a_i)^{k+s}=(a_j/a_i)^s$ for any integer $s$, by Eq. (\ref{Eq. A in the proof of Thm.1.1}) we have 
	\begin{equation}\label{Eq. B in the proof of Thm.1.1}
    \det D_k
    =(-1)^{(k+1)(q-3)/2}\times\det\left[f_k\left(\frac{a_j}{a_i}\right)\right]_{1\le i,j\le k}.		
	\end{equation}
	Let
	$$w_k:=\prod_{s=0}^{k-1}\sum_{r=0}^{\lfloor\frac{q-3-2s}{2k}\rfloor}	\binom{(q-3)/2}{s+rk}\in\mathbb{F}_p.$$
	Then by Lemma \ref{Lem. K} and Lemma \ref{Lem. product involving ai} we obtain 
	$$\det D_k=(-1)^{(k+1)(q-3)/2}\cdot w_k\cdot \prod_{1\le i<j\le k}\left(a_j-a_i\right)\left(\frac{1}{a_j}-\frac{1}{a_i}\right)=(-1)^{(k+1)(q-3)/2}\cdot w_k\cdot k^k\in\mathbb{F}_p.$$
	This completes the proof. \qed
	
	We next prove Corollary \ref{Cor. B of Thm.1.1}. 
	
	{\noindent\bf Proof of Corollary \ref{Cor. B of Thm.1.1}.} By Theorem \ref{Thm. the determinant of Dk} if $k=(q-1)/2$, then we have 
	\begin{align}
		\det D_{(q-1)/2}
		&=(-1)^{(q-3)/2}\cdot\prod_{s=0}^{(q-3)/2}\binom{(q-3)/2}{s}\cdot (-1)^{(q-1)/2}\left(\frac{1}{2}\right)^{(q-1)/2}\nonumber\\
		&=-1\cdot\prod_{s=0}^{(q-3)/2}\binom{(q-3)/2}{s}\cdot\phi(2).\label{Eq. A in the proof of Cor.B}
	\end{align}
	The last equality follows from 
	$$\left(\frac{1}{2}\right)^{(q-1)/2}=\phi\left(\frac{1}{2}\right)=\phi(2).$$
	
	We now divide the remaining proof into the following cases.
	
	{\bf Case 1.} $q\equiv1\pmod4$. 
	
	In this case we have $\sqrt{-1}\in\mathbb{F}_q$, where $\sqrt{-1}$ is an element in the algebraic closure of $\mathbb{F}_q$ such that $(\sqrt{-1})^2=-1$. 
	As
	$$2=-\sqrt{-1}\left(1+\sqrt{-1}\right)^2,$$
	we have $\phi(2)=\phi(-\sqrt{-1})$ and hence 
	\begin{equation}\label{Eq. B in the proof of Cor.B}
		\phi(2)=\phi(-\sqrt{-1})=\left(-\sqrt{-1}\right)^{(q-1)/2}=(-1)^{(q-1)/4}.
	\end{equation}
	Combining Eq. (\ref{Eq. A in the proof of Cor.B}) with Eq. (\ref{Eq. B in the proof of Cor.B}) and noting that 
	$$\binom{(q-3)/2}{s}=\binom{(q-3)/2}{(q-3)/2-s},$$ 
	we obtain 
	\begin{equation}\label{Eq. C in the proof of Cor.B}
	\det D_{(q-1)/2}=(-1)^{(q+3)/4}\prod_{s=0}^{(q-5)/4}\binom{(q-3)/2}{s}^2.
	\end{equation}
	This proves the case $q\equiv1\pmod4$. 
	
	{\bf Case 2.} $q\equiv3\pmod4$ and $q>3$. 
	
	In this case since $q\equiv3\pmod4$ we have 
	$$(1+\sqrt{-1})^q=1+(\sqrt{-1})^q=1-\sqrt{-1}.$$
	This, together with $2=-\sqrt{-1}\left(1+\sqrt{-1}\right)^2$, implies that 
	\begin{align}
		\phi(2)
		&=2^{(q-1)/2}=\left(-\sqrt{-1}\right)^{(q-3)/2}(-\sqrt{-1})\left(1+\sqrt{-1}\right)^{q-1}\nonumber\\
		&=(-1)^{(q-3)/4}(-\sqrt{-1})\frac{1-\sqrt{-1}}{1+\sqrt{-1}}\nonumber\\
		&=(-1)^{(q+1)/4}.\label{Eq. D in the proof of Cor.B}
	\end{align}
	Combining Eq. (\ref{Eq. A in the proof of Cor.B}) with Eq. (\ref{Eq. D in the proof of Cor.B}) we obtain that 
	\begin{equation}\label{Eq. E in the proof of Cor.B}
		\det D_{(q-1)/2}=(-1)^{(q+5)/4}\binom{(q-3)/2}{(q-3)/4}\prod_{s=0}^{(q-7)/4}\binom{(q-3)/2}{s}^2.
	\end{equation}
	This proves the case $q\equiv3\pmod4$ and $q>3$. 
	
	Now we assume that $q=p$ is an odd prime. Suppose first $p\equiv1\pmod4$. 
	Then by Eq. (\ref{Eq. C in the proof of Cor.B}) we see that $\det D_{(q-1)/2}$ is a non-zero square in $\mathbb{F}_p$, i.e., $(\frac{\det D_{(p-1)/2}}{p})=1$. 
	In the case $p\equiv3\pmod4$ and $p>3$, by Eq. (\ref{Eq. E in the proof of Cor.B}) and $(\frac{-2}{p})=(\frac{-1/2}{p})=(-1)^{(p+5)/4}$, we have 
	\begin{align*}
	\left(\frac{\det D_{(q-1)/2}}{p}\right)&=(-1)^{(p+5)/4}\left(\frac{\frac{p-3}{2}!}{p}\right)\\
	                      &=(-1)^{(p+5)/4}\left(\frac{\frac{p-1}{2}!}{p}\right)\left(\frac{\frac{-1}{2}}{p}\right)\\
	                      &=\left(\frac{\frac{p-1}{2}!}{p}\right)\\
	                      &=(-1)^{(h(-p)+1)/2}.
	\end{align*}
    The last equality follows from Mordell's result \cite{Mordell} which states that 
    $$\frac{p-1}{2}!\equiv (-1)^{\frac{h(-p)+1}{2}}\pmod p$$
    whenever $p\equiv3\pmod4$ and $p>3$.
    
    In view of the above, we have completed the proof.\qed
	
	\section{Proof of Theorem \ref{Thm. the legendre symbol of Dk, k odd}}
	
	To prove Theorem \ref{Thm. the legendre symbol of Dk, k odd}, we first need the following known result. 
	
    \begin{lemma}\label{Lem. sum of rth power of x over Fq}
    	Let $\mathbb{F}_q$ be the finite field of $q$ elements and let $r$ be a positive integer. Then 
    	\begin{equation*}
    		\sum_{x\in\mathbb{F}_q}x^r=\begin{cases}
    			 0  & \mbox{if}\ q-1\nmid r,\\
    			-1  & \mbox{if}\ q-1\mid r.
    		\end{cases}
    	\end{equation*}
    \end{lemma}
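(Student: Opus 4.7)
The plan is to reduce the sum to a geometric series over $\mathbb{F}_q^{\times}$ and then split into the two cases according to whether $q-1 \mid r$ or not. Since $r \ge 1$, the $x=0$ term contributes nothing, so I may replace $\sum_{x \in \mathbb{F}_q} x^r$ by $\sum_{x \in \mathbb{F}_q^{\times}} x^r$ throughout.

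First I would handle the easy case $q-1 \mid r$. By Fermat's little theorem in $\mathbb{F}_q^{\times}$ (the fact that the multiplicative group has order $q-1$), every nonzero $x \in \mathbb{F}_q$ satisfies $x^{q-1}=1$, and hence $x^r = 1$ whenever $q-1 \mid r$. Summing over the $q-1$ nonzero elements gives $q-1$, which equals $-1$ in $\mathbb{F}_q$ since $\mathbb{F}_q$ has characteristic $p$ with $p \mid q$.

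For the case $q-1 \nmid r$, I would pick a generator $g$ of the cyclic group $\mathbb{F}_q^{\times}$, so that $\{g, g^2, \ldots, g^{q-1}\}$ enumerates $\mathbb{F}_q^{\times}$. Then
\begin{equation*}
\sum_{x \in \mathbb{F}_q^{\times}} x^r \;=\; \sum_{k=1}^{q-1} g^{kr} \;=\; g^r \cdot \frac{g^{r(q-1)}-1}{g^r - 1}.
\end{equation*}
The denominator $g^r - 1$ is nonzero in $\mathbb{F}_q$ precisely because $\operatorname{ord}(g) = q-1$ does not divide $r$, so this expression is well defined. The numerator vanishes since $(g^{q-1})^r = 1^r = 1$, giving the value $0$.

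No step here presents a real obstacle; the only mild subtlety is justifying that $q - 1 \equiv -1$ in $\mathbb{F}_q$ (which follows from $p \mid q$, hence $q = 0$ in $\mathbb{F}_q$) and verifying that the geometric-series formula is valid in the field $\mathbb{F}_q$, which it is as soon as the common ratio $g^r$ differs from $1$.
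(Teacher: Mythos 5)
Your proof is correct and complete: the reduction to $\mathbb{F}_q^{\times}$, the count $q-1=-1$ in characteristic $p$ when $q-1\mid r$, and the geometric-series (generator) argument with ratio $g^r\neq 1$ when $q-1\nmid r$ are all sound. The paper itself states this lemma as a known result and supplies no proof, so there is nothing to compare against; your argument is the standard one and fills that gap adequately.
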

	
	We will see later in the proof that $\det D_k$ has close relations with the determinant of a circulant matrix. Hence we now introduce the definition of circulant matrices. Let $R$ be a commutative ring. Let $b_0,b_1,\ldots,b_{s-1}\in R$. We define the circulant matrix $C(b_0,\ldots,b_{s-1})$ to be an $s\times s$ matrix whose ($i,j$)-entry is $b_{j-i}$ where the indices are cyclic module $s$, i.e., $b_i=b_j$ whenever $i\equiv j\pmod s$. The third author \cite[Lemma 3.4]{W21}  obtained the following result.
	
	\begin{lemma} \label{lemma Wu ffa}
		Let $R$ be a commutative ring. Let $s$ be a positive integer. Let $b_0,b_1,\ldots,b_{s-1}\in R$ such that
		\begin{align*}
			b_i=b_{s-i} \ \ \t{for each $1\ls i\ls s-1$.}
		\end{align*}
		If $s$ is even, then there exists an element $u\in R$ such that 
		$$
		\det C(b_0,\ldots,b_{s-1})=\left(\sum_{i=0}^{s-1}b_i\right)\left(\sum_{i=0}^{s-1}(-1)^ib_i\right)\cdot u^2.
		$$
		If $s$ is odd, then there exists an element $v\in R$ such that
		$$
		\det C(b_0,\ldots,b_{s-1})=\left(\sum_{i=0}^{s-1}b_i\right)\cdot v^2.
		$$
	\end{lemma}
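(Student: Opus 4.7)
The plan is to prove the identity in a universal polynomial ring (so that it specializes to any commutative ring $R$) by exploiting the diagonalization of circulants via roots of unity. As a first step, I treat $b_0, b_1, \ldots, b_{\lfloor s/2 \rfloor}$ as independent indeterminates over $\mathbb{Z}$, setting $b_{s-i} := b_i$ for $\lfloor s/2 \rfloor < i < s$. The entries of $C = C(b_0, \ldots, b_{s-1})$, and hence $\det C$, lie in the polynomial ring $R_0 := \mathbb{Z}[b_0, \ldots, b_{\lfloor s/2 \rfloor}]$; producing universal $u, v \in R_0$ realizing the stated factorizations will yield the lemma by specialization $b_i \mapsto b_i \in R$.

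Next, passing to the algebraic closure of the fraction field and adjoining a primitive $s$-th root of unity $\omega$, the circulant $C$ is diagonalized by the Fourier matrix with entries $\omega^{ij}$, with eigenvalues
$$\lambda_k = \sum_{i=0}^{s-1} b_i \omega^{ik}, \qquad 0 \le k \le s-1,$$
so that $\det C = \prod_{k=0}^{s-1} \lambda_k$. The palindromic hypothesis $b_i = b_{s-i}$ gives $\lambda_k = \lambda_{s-k}$ for $1 \le k \le s-1$ (by substituting $i \mapsto s-i$ in the defining sum). Together with $\lambda_0 = \sum_i b_i$, and $\lambda_{s/2} = \sum_i (-1)^i b_i$ when $s$ is even, pairing up equal eigenvalues yields
$$
\det C =
\begin{cases}
\Bigl(\sum_i b_i\Bigr) v^2, & s \text{ odd},\ v := \prod_{k=1}^{(s-1)/2} \lambda_k, \\
\Bigl(\sum_i b_i\Bigr)\Bigl(\sum_i (-1)^i b_i\Bigr) u^2, & s \text{ even},\ u := \prod_{k=1}^{s/2-1} \lambda_k,
\end{cases}
$$
matching the shape of the claimed factorization.

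The main obstacle is showing that $u$ and $v$, \emph{a priori} in $\mathbb{Z}[\omega] \otimes_{\mathbb{Z}} R_0$, actually lie in $R_0$; only then do they specialize to elements of $R$. For this I invoke Galois theory: the group $G = \mathrm{Gal}(\mathbb{Q}(\omega)/\mathbb{Q}) \cong (\mathbb{Z}/s\mathbb{Z})^\times$ acts on eigenvalues by $\sigma_a(\lambda_k) = \lambda_{ak \bmod s}$, and via the palindromic identification $\lambda_k = \lambda_{s-k}$ (compatibly, since $s - ak \equiv a(s - k) \pmod s$) this induces a permutation of the index set $\{1, 2, \ldots, \lfloor(s-1)/2\rfloor\}$; when $s$ is even, $\lambda_{s/2}$ is itself $G$-fixed because $\gcd(a, s) = 1$ forces $a$ to be odd. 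Hence $u$ and $v$ are $G$-invariant. Being products of the $\lambda_k$, which are integral over $R_0$, they lie in the integral closure of $R_0$ in its fraction field; and since $R_0$ is a polynomial ring over $\mathbb{Z}$, hence a UFD, hence integrally closed, we conclude $u, v \in R_0$. Specializing the resulting universal identity to the given ring $R$ completes the proof.
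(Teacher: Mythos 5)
Your argument is correct. Note that the paper itself does not prove this lemma; it is quoted verbatim from \cite[Lemma 3.4]{W21}, so there is no in-paper proof to compare against. Your route --- diagonalizing the circulant over $\mathbb{Z}[\omega]$, pairing $\lambda_k$ with $\lambda_{s-k}$ via the palindromic condition $b_i=b_{s-i}$, isolating $\lambda_0=\sum_i b_i$ and (for even $s$) $\lambda_{s/2}=\sum_i(-1)^ib_i$, and then using Galois invariance together with the integral closedness of the polynomial ring $\mathbb{Z}[b_0,\ldots,b_{\lfloor s/2\rfloor}]$ to descend $u,v$ to $R_0$ before specializing to $R$ --- is the standard and essentially the same mechanism used in the cited source, and all the details check out: $\sigma_a$ permutes the $\pm$-classes of indices because $\gcd(a,s)=1$, it fixes $\lambda_{s/2}$ because $a$ must be odd when $s$ is even, and the circulant determinant formula is a valid polynomial identity over the domain $\mathbb{Z}[\omega][b_0,\ldots,b_{\lfloor s/2\rfloor}]$ since it holds over $\mathbb{C}$. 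The only cosmetic remark is that one should say $u,v$ lie in the intersection of $\operatorname{Frac}(R_0)$ with the integral closure of $R_0$ in the bigger field (they are Galois-fixed elements of $\mathbb{Z}[\omega]\otimes_{\mathbb{Z}}R_0$, hence integral over $R_0$ and rational over $\operatorname{Frac}(R_0)$), which is what your UFD argument then handles; this is exactly what you intended.
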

	
	{\noindent\bf Proof of Theorem \ref{Thm. the legendre symbol of Dk, k odd}.} 
	As $k$ is odd, we have $2\mid (q-1)/k$. For simplicity, we let $q-1=nk=2mk$. Since $k\mid (q-1)/2$ in this case, $\phi(a_i)=a_i^{(q-1)/2}=1$ for each $a_i\in U_k$.  Let $g$ be a generator of the cyclic group $\mathbb{F}_q^{\times}$. By the above one can verify that 
	\begin{align}
		\det D_k
		&=\prod_{i=1}^ka_i^{(q-3)/2}\det\left[\left(1+\frac{a_j}{a_i}\right)^{(q-3)/2}\right]_{1\le i,j\le k}\nonumber\\
		&=\det\left[\left(1+g^{nj-ni}\right)^{(q-3)/2}\right]_{0\le i,j\le k-1}\nonumber
	\end{align}
	The last equality follows from 
	\begin{equation*}
		\prod_{i=1}^ka_i=(-1)^{k+1}=1.
	\end{equation*}
	By the above and using the properties of determinants, one can verify that 
	\begin{equation}\label{Eq. A in the proof of Thm.1.2}
		\det D_k=\det\left[\left(1+g^{nj-ni}\right)^{(q-3)/2}g^{mj-mi}(-1)^{j-i}\right]_{0\le i,j\le k-1}.
	\end{equation}

    For each $0\le i\le k-1$ we set 
    $$b_i=\left(1+g^{ni}\right)^{(q-3)/2}g^{mi}(-1)^i.$$
	We claim that $b_i=b_{k-i}$ for any $1\le i\le k-1$. In fact, for any $1\le i\le k-1$, noting that 
	$$g^{km}=\phi(g)=-1,\ g^{nk}=1,\ 2\nmid k,\ \text{and}\ \left(\frac{1}{g^{ni}}\right)^{(q-3)/2}=g^{ni},$$
	one can verify that 
	\begin{align*}
		b_{k-i}&=\left(1+g^{nk-ni}\right)^{(q-3)/2}g^{mk-mi}(-1)^{k-i}\\
		       &=\left(\frac{1+g^{ni}}{g^{ni}}\right)^{(q-3)/2}g^{-mi}(-1)^i\\
		       &=\left(1+g^{ni}\right)^{(q-3)/2}g^{(n-m)i}(-1)^i\\
		       &=b_i.
	\end{align*}
Hence by Eq. (\ref{Eq. A in the proof of Thm.1.2}) we have 
$$\det D_k=\det C(b_0,b_1,\cdots,b_{k-1}).$$
	Now by Lemma \ref{lemma Wu ffa} and Eq. (\ref{Eq. A in the proof of Thm.1.2}) we obtain that 
	\begin{equation}\label{Eq. D in the proof of Thm1.2}
		\det D_k=\left(\sum_{i=0}^{k-1}b_i\right)v^2
	\end{equation}
	for some $v\in\mathbb{F}_q$. Now we consider the sum $\sum_{i=0}^{k-1}b_i$. 
	It is easy to verify that 
	\begin{align}
	\sum_{i=0}^{k-1}b_i
	&=\sum_{i=0}^{k-1}\left(1+g^{ni}\right)^{(q-3)/2}g^{mi}(-1)^i\nonumber\\
	&=\sum_{i=0}^{k-1}\left(1+g^{ni}\right)^{(q-3)/2}g^{mi}g^{mki}\nonumber\\
	&=\frac{1}{n}\sum_{x\in\mathbb{F}_q}\left(1+x^n\right)^{(q-3)/2}x^{m+mk}\nonumber\\
	&=\frac{1}{n}\sum_{r=0}^{mk-1}\binom{(q-3)/2}{r}\sum_{x\in\mathbb{F}_q}x^{m+mk+2mr}.
	\label{Eq. E in the proof of Thm1.2}
	\end{align}
	Now by Lemma \ref{Lem. sum of rth power of x over Fq} and $2\nmid k$,  we see that 
	$$\sum_{x\in\mathbb{F}_q}x^{m+mk+2mr}=\begin{cases}
		 0 & \mbox{if}\ k\nmid 1+2r,\\
		-1 & \mbox{if}\ k\mid 1+2r.
	\end{cases} $$
   Applying this and Lemma \ref{Lem. sum of rth power of x over Fq} to Eq. (\ref{Eq. E in the proof of Thm1.2}) and noting that $-1/n=k$ in $\mathbb{F}_p$, we obtain 
   \begin{equation}\label{Eq. F in the proof of Thm1.2}
   	s_k:=\sum_{i=0}^{k-1}b_i=k\sum_{r=1}^m\binom{(q-3)/2}{((2r-1)k-1)/2}.
   \end{equation}
	Suppose that $D_k$ is non-singular. Then by Theorem \ref{Thm. the determinant of Dk} we have $\det D_k\in\mathbb{F}_p^{\times}$. Hence by Eq. (\ref{Eq. D in the proof of Thm1.2}) and Eq. (\ref{Eq. F in the proof of Thm1.2}) we have 
	$$\left(\frac{\det D_k}{p}\right)=\left(\frac{s_k}{p}\right).$$
	This completes the proof. \qed
	
    \section{Proof of Theorem \ref{Thm. the determinant of Tk}}
	
	{\noindent\bf Proof of Theorem \ref{Thm. the determinant of Tk}.} It is clear that 
	\begin{align}
		\det T_k
		&=\prod_{i=1}^{k}(a_i^2)^{(q-3)/2}\times
		\det\left[\left(1+\frac{a_j}{a_i}+\left(\frac{a_j}{a_i}\right)^2\right)^{(q-3)/2}\right]_{1\le i,j\le k}\nonumber\\
		&=\det\left[\left(1+\frac{a_j}{a_i}+\left(\frac{a_j}{a_i}\right)^2\right)^{(q-3)/2}\right]_{1\le i,j\le k}.\label{Eq. A in the proof of Thm.1.3}
	\end{align}
	The last equality follows from 
	$$\prod_{i=1}^ka_i=(-1)^{k+1}.$$
	Let 
	$$g_k(t)=
	\sum_{s=0}^{k-1}
	\left(\sum_{r=0}^{\lfloor\frac{q-3-s}{k}\rfloor}
	\binom{(q-3)/2}{s+rk-(q-3)/2}_2\right)t^s\in\mathbb{F}_p[t]$$
	with $\deg(g_k)\le k-1$. Then by Eq. (\ref{Eq. A in the proof of Thm.1.3}), Lemma \ref{Lem. K} and the definition of trinomial coefficients we have 
	\begin{align*}
		\det T_k
		&=\det\left[g_k\left(\frac{a_j}{a_i}\right)\right]_{1\le i,j\le k}\\
		&=\prod_{1\le i<j\le k}\left(a_j-a_i\right)\left(\frac{1}{a_j}-\frac{1}{a_i}\right)\times\prod_{s=0}^{k-1}\sum_{r=0}^{\lfloor\frac{q-3-s}{k}\rfloor}
		\binom{(q-3)/2}{s+rk-(q-3)/2}_2\\
		&=l_kk^k\in\mathbb{F}_p.		
	\end{align*}
	The last equality follows from Lemma \ref{Lem. product involving ai}. 
	
	In view of the above, we have completed the proof.\qed
	
	\Ack\ We would like to thank the referee for helpful comments. This work was supported by the Natural Science Foundation of China (Grant No. 12101321).

\end{document}